\documentclass[12pt,final]{amsart}
\usepackage[utf8]{inputenc}
\usepackage[T1]{fontenc}
\usepackage[letterpaper,centering]{geometry}
\usepackage{lmodern}
\usepackage{mathrsfs}
\usepackage{amsmath,amssymb,amsfonts}
\usepackage[unicode,pdfborder={0 0 0},final]{hyperref}
\usepackage{enumerate}
\usepackage{multicol}
\usepackage{color}

\usepackage[all]{xy}
{\setbox0\hbox{$ $}}\fontdimen16\textfont2=\fontdimen17\textfont2
\entrymodifiers={+!!<0pt,\the\fontdimen22\textfont2>}
\SelectTips{cm}{12}

\theoremstyle{plain}
\newtheorem{thm}{Theorem}[section]

\newtheorem{lem}[thm]{Lemma}
\newtheorem{question}[thm]{Question}
\newtheorem{cor}[thm]{Corollary}
\newtheorem{prop}[thm]{Proposition}
\theoremstyle{definition}
\newtheorem{defn}[thm]{Definition}
\newtheorem{rmk}[thm]{Remark}
\newtheorem{rmks}[thm]{Remarks}

\theoremstyle{remark}
\newtheorem{Step}{Step}
\numberwithin{equation}{section}

\definecolor{vertow}{rgb}{0.18,0.61,0.31}

\newcommand{\isoto}{\myxrightarrow{\,\sim\,}}
\makeatletter
\def\myrightarrow{{\setbox\z@\hbox{$\rightarrow$}\dimen0\ht\z@\multiply\dimen0 6\divide\dimen0 10\ht\z@\dimen0\box\z@}}
\def\myrightarrowfill@{\arrowfill@\relbar\relbar\myrightarrow}
\newcommand{\myxrightarrow}[2][]{\ext@arrow 0359\myrightarrowfill@{#1}{#2}}
\def\myleftarrow{{\setbox\z@\hbox{$\leftarrow$}\dimen0\ht\z@\multiply\dimen0 6\divide\dimen0 10\ht\z@\dimen0\box\z@}}
\def\myleftarrowfill@{\arrowfill@\myleftarrow\relbar\relbar}
\newcommand{\myxleftarrow}[2][]{\ext@arrow 3095\myleftarrowfill@{#1}{#2}}
\makeatother
\newcommand{\kF}{{\mathfrak F}}

\newcommand{\kU}{{\mathfrak U}}

\newcommand{\kX}{{\mathfrak X}}
\newcommand{\kY}{{\mathfrak Y}}
\newcommand{\kZ}{{\mathfrak Z}}
\newcommand{\sC}{{\mathscr C}}

\newcommand{\sF}{{\mathscr F}}

\newcommand{\sO}{{\mathscr O}}

\newcommand{\ok}{{\overline{k}}}
\newcommand{\oM}{{\mkern2mu\overline{\mkern-2muM}}}
\newcommand{\wM}{{\widetilde{M}}}
\newcommand{\Dbar}{{\overline{\D\mkern-1.2mu}\mkern1.2mu}}
\newcommand{\wpsi}{{\widetilde{\psi}}}
\newcommand{\wxi}{{\widetilde{\xi}}}

\newcommand{\C}{{\mathbf C}}
\newcommand{\D}{{\mathbf D}}

\newcommand{\N}{{\mathbf N}}
\renewcommand{\P}{{\mathbf P}}

\newcommand{\Id}{\mathrm{Id}}

\newcommand{\ev}{\mathrm{ev}}

\newcommand{\Spec}{\mathrm{Spec}}
\newcommand{\Hilb}{\mathrm{Hilb}}
\newcommand{\Res}{\mathrm{Res}}

\renewcommand{\emptyset}{\varnothing}

\newcommand{\ci}{\sC^\infty}

\hyphenation{mani-fold}
\hyphenpenalty=500
\pretolerance=515

\date{March 12th, 2025}
\title{Tight approximation for rationally simply connected varieties}

 \author{Olivier Benoist}
\address{D\'epartement de math\'ematiques et applications, \'Ecole normale sup\'erieure, 45~rue d'Ulm, 75230 Paris Cedex 05, France}
 \email{olivier.benoist@ens.fr}

 \author{Olivier Wittenberg}
\address{Institut Galil\'ee, Universit\'e Sorbonne Paris Nord, 99~avenue Jean-Baptiste Cl\'ement, 93430 Villetaneuse, France}
\email{wittenberg@math.univ-paris13.fr}

\begin{document}
\begin{abstract}
We prove that holomorphic maps from an open subset of a complex smooth projective
curve to a complex smooth projective rationally simply connected variety
can be approximated by algebraic maps for the compact-open topology.
This theorem can be applied in particular when the target is a smooth
hypersurface of degree $d$ in~$\P^n_{\C}$ with~${n\geq d^2-1}$.  We deduce
it from a more general result: the tight approximation property holds for
rationally simply connected varieties over function fields of complex
curves.
\end{abstract}

\dedicatory{To Yuri Prokhorov, on the occasion of his 60th birthday}
\maketitle

\section{Introduction}

Algebraic approximation problems for holomorphic maps are central in complex algebraic geometry.
We will focus on the following precise question.

\begin{question}
\label{quest}
Let $B$ and $X$ be smooth projective algebraic varieties over $\C$. Suppose that $B$ is a connected
curve.  Let $\Omega\subset B(\C)$ be an open subset. Can all holomorphic maps~$\Omega\to X(\C)$ be approximated by algebraic maps (that is, by maps induced by morphisms of algebraic varieties $B\to X$)
for the compact-open topology?
\end{question}

The archetype of such a result is Runge's theorem and its generalizations by Behnke--Stein \cite{BS} and Royden \cite{Royden},
% K\"oditz--Timmann \cite{KT}, 
which yield a positive answer to Question~\ref{quest} for~${X=\P^1_{\C}}$ (see \cite[Theorem 10]{Royden}).
%Behnke--Stein: only on noncompact Riemann surfaces.
Further positive answers were obtained when $X$ is a Grassmannian (see \cite[Theorem 1]{kucharzRunge})
and when $X$ is a rational surface \cite[Corollary 1.2]{BKC}. More generally,  a positive
answer was given to Question~\ref{quest} when~$X$ is birational to an iterated fibration into homogeneous spaces under connected linear algebraic groups (combine \cite[Theorem~3.1, Theorem~5.1 and Theorem~7.4]{bwtight}). This
 class of varieties includes smooth cubic hypersurfaces of dimension $\geq 2$ and smooth complete intersections of two quadrics of dimension~$\geq 2$ (see \cite[Examples~5.4 and~5.6]{bwtight}).
In addition, Forstneri\v{c} and Lárusson have answered Question~\ref{quest}
 positively when~$X$ is birational to an algebraically elliptic smooth projective variety (see \cite[Corollary 1.8]{FL}); in view of \cite[Corollary~1.4]{kalimanzaidenberg}, this recovers the case of smooth cubic hypersurfaces of dimension $\geq 2$. 

The answer to Question~\ref{quest} can also be in the negative, for instance when $B=\P^1_\C$ and~$X$ is an elliptic curve; more generally, a positive answer implies that $b_1(X)=0$ (see \cite[Corollary~1.3]{BKC}).
We do not know the answer for any~$K3$ surface~$X$.

It was
suggested in \cite[Question 3.6]{bwtight} that Question \ref{quest} should have a positive answer whenever $X$ is rationally connected, i.e.\ whenever any two general points of~$X(\C)$ can be joined by a rational curve.  In this article, we provide an answer for \textit{rationally simply connected varieties}. 

\begin{thm}
\label{th1}
Question \ref{quest} has a positive answer whenever $X$ is rationally simply connected.
\end{thm}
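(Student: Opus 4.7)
Following the roadmap announced in the abstract, the plan is to deduce Theorem~\ref{th1} from the tight approximation property for rationally simply connected varieties over the function field $K=\C(B)$, and then to establish the latter by the strategy pioneered by de Jong--Starr for rationally simply connected varieties over function fields of curves. The reduction is modeled on the framework developed in \cite{bwtight}: a holomorphic map $\psi\colon\Omega\to X(\C)$ restricts, at each chosen point $b_i\in\Omega$, to a formal germ that defines a $K_{v_i}$-point of $X_K$ (where $v_i$ is the place of $K$ at $b_i$), and over a compact $L\subset\Omega$ to an analytic datum; approximating $\psi$ algebraically amounts to finding a $K$-point of~$X_K$ close to these local data in the tight-approximation sense, which is exactly what the general property will provide.

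The heart of the proof is to establish tight approximation for a smooth projective rationally simply connected $X_K$ over~$K$. One spreads out to a family $\pi\colon\sX\to B$, fixes places $v_1,\dots,v_r$ and jet data at each, and considers the moduli space of sections of $\pi$ of a fixed large degree satisfying the prescribed jet conditions. The strategy is to show that this moduli space, or a distinguished component of it, is non-empty and itself rationally connected. Non-emptiness should follow from a comb construction: start from any section of $\pi$ (which exists by rational connectedness of the geometric generic fibre, via Graber--Harris--Starr), attach at each $v_i$ a very free rational curve in the fibre $\pi^{-1}(v_i)$ through the points dictated by the jet data, and smooth the resulting comb. Rational connectedness of the moduli of such sections should use the rational simple connectedness hypothesis to interpolate any two such sections through rational curves inside the moduli, again by combing and smoothing. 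Once this is done, rational connectedness supplies the sections needed over arithmetic data at the~$v_i$, while Runge's theorem applied to~$B$ and to the universal section upgrades this to uniform approximation over the compact~$L\subset\Omega$, producing the full tight approximation statement.

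The principal obstacle will be the rational connectedness of the moduli of sections \emph{under arbitrarily strong jet constraints}: the rational simple connectedness assumption is a statement about rational curves through pairs of points in a single geometric fibre of~$\pi$, whereas one needs connectivity of a moduli parametrizing global sections of $\pi$ satisfying prescribed behavior to arbitrary order at an arbitrary finite set of closed points of $B$. Translating the fibrewise hypothesis into such a statement over $B$ requires a delicate iterated degeneration-and-smoothing analysis, with sharp control of the normal bundles of the sections being deformed and of the positivity that makes such deformations available as the prescribed jet orders and the number of places grow. Matching the resulting algebraic approximation to the compact-open topology demanded by Question~\ref{quest}, and in particular passing from weak approximation at finitely many places to genuine tight approximation on compacts, will be the other main technical point.
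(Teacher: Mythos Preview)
Your reduction of Theorem~\ref{th1} to the tight approximation property for~$X_{\C(B)}$ is correct and matches the paper.  The difficulty lies entirely in establishing tight approximation, and here your plan has a genuine gap.

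What you outline---spreading out to~$\kX\to B$, imposing jet conditions at finitely many places~$v_1,\dots,v_r$, and showing that a moduli space of sections with those jets is nonempty and rationally connected via combs and smoothing---is essentially the Hassett--Starr proof of \emph{weak} approximation for rationally simply connected varieties (\cite[Theorem~4.7]{Hassett}).  That argument produces sections matching~$u$ to arbitrary finite order at finitely many points, but it gives no control whatsoever in the compact-open topology on a compact~$L\subset\Omega$.  Your sentence ``Runge's theorem applied to~$B$ and to the universal section upgrades this to uniform approximation over the compact~$L$'' is the step that does not go through: the universal section over the moduli is already algebraic, so Runge contributes nothing, and a rationally connected family of algebraic sections agreeing with~$u$ to high order at~$b_1,\dots,b_m$ need not contain any section uniformly close to~$u$ on~$L$.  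The holomorphic datum~$u|_L$ is infinite-dimensional and is simply not captured by jets at finitely many points; you yourself flag this in your last paragraph, but offer no mechanism to bridge it.

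The paper's route is different in exactly this respect.  The key missing idea is a \emph{Nash approximation} step: by the Demailly--Lempert--Shiffman theorem (\cite[Theorem~1.1]{DLS}, used here as Proposition~\ref{propDLS}), one first approximates~$u$ by a holomorphic section whose image lies in an \emph{algebraic curve}~$B'\subset\kX$ dominating~$B$.  This replaces the transcendental datum~$u$ by finite-dimensional algebraic data.  The comb is then built not on the total space but on the generic fibre~$X$: its handle is contracted to a point, and its teeth are very free rational curves joining that point to the components of~$B'$ (this is where condition~(iii) of Definition~\ref{defiRSC} enters).  Using the rational connectedness of the component~$M_d$ of~$\oM_{0,0}(X)$, one deforms this comb inside~$X$ to a single embedded~$\P^1_k$ while simultaneously dragging~$u$ along in the family of Zariski closures; the hardest technical point is arranging that these closures remain smooth near~$u(\Omega)$ so that~$u$ can be lifted throughout the deformation.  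One is then reduced to tight approximation for~$\P^1_k$, which is Royden's theorem.  None of this appears in your proposal.
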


The class of rationally simply connected varieties is more restricted than
the class of rationally connected varieties: one not only requires that~$X$
be rationally connected, but also that some spaces of rational curves
on~$X$ themselves be rationally connected.  These varieties were first
introduced by Starr \cite{Starr}; their theory was developed by de Jong and
Starr \cite{djs} and in the far-reaching work of de Jong, He and Starr \cite{dJHS}.  Various definitions have appeared in the literature. We refer
to~\S\ref{defRSC} for the one that we use in this article and that is
close to Hassett's \cite[Theorem~4.7]{Hassett}.

\bigskip
The main concrete applications of Theorem \ref{th1} come from the fact that smooth complete intersections of sufficiently low degree in weighted projective spaces are known to be rationally simply connected, by the works of de Jong and Starr \cite{djs} and Minoccheri \cite{minoccheri}.  In \S\ref{exRSC}, we recall these results and verify their compatibility with the definition of rationally simply connected variety that we use. In the case of hypersurfaces in projective space, we deduce the following corollary.

\begin{cor}
\label{corhypersurface}
Question \ref{quest} has a positive answer for smooth hypersurfaces of degree~$d$ in $\P^n_{\C}$ whenever $n\geq d^2-1$.
\end{cor}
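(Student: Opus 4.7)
The corollary will be deduced from Theorem~\ref{th1}, so the plan is to establish that a smooth hypersurface $X \subset \P^n_{\C}$ of degree~$d$ with $n \geq d^2-1$ satisfies the hypothesis of that theorem, namely rational simple connectedness in the sense of~\S\ref{defRSC}.

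The first step is to invoke the theorem of de Jong and Starr~\cite{djs}: under the numerical condition $n + 1 \geq d^2$ (which is exactly $n \geq d^2 - 1$), a smooth degree~$d$ hypersurface in~$\P^n_{\C}$ is rationally simply connected in their sense. The geometric input behind that result is that one can produce a sufficiently flexible family of low-degree rational curves on~$X$ whose spaces of two-pointed members joining two fixed general points are themselves rationally connected; this is established by a deformation-theoretic analysis of the restricted tangent bundles of such curves together with a conic-bundle argument on general hyperplane sections.

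The main obstacle---and the actual content of the corollary beyond a formal citation---is the comparison of definitions. Several not manifestly equivalent variants of ``rational simple connectedness'' appear in the literature: they differ in whether one uses Kontsevich stable maps, Hilbert schemes of rational curves, smooth compactifications of spaces of maps, or a Hassett-style formulation, and in the precise properties required of evaluation or relative evaluation morphisms. One must therefore verify that the output of~\cite{djs} yields the specific definition adopted in~\S\ref{defRSC}, which is modelled on~\cite[Theorem~4.7]{Hassett}. This compatibility check is the task carried out in~\S\ref{exRSC}. Once it is in hand, applying Theorem~\ref{th1} to~$X$ delivers the desired approximation statement and completes the proof.
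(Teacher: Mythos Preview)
Your approach is the same as the paper's and is correct for $d\geq 3$, but it contains a small gap at low degree. The de~Jong--Starr result, as recorded in Theorem~\ref{th:djs} and verified in~\S\ref{exRSC}, does not merely assume $n+1\geq d^2$: it also requires $d_i\geq 2$ and $n-c\geq 3$. For a single hypersurface ($c=1$) this forces $d\geq 2$ and $n\geq 4$. When $d\geq 3$ the inequality $n\geq d^2-1\geq 8$ already implies $n\geq 4$, so your argument goes through; but the cases $d=1$ (hyperplanes) and $(d,n)=(2,3)$ (quadric surfaces) are not covered by Theorem~\ref{th:djs}, and you have not explained why they are rationally simply connected in the sense of Definition~\ref{defiRSC}.

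The paper's own proof (given for the stronger Corollary~\ref{corohypersurface}, of which Corollary~\ref{corhypersurface} is a special case) sidesteps this by handling the small cases directly via earlier results: linear subspaces by \cite[Theorem~4.2]{bwtight} and quadrics by \cite[Example~4.5]{bwtight}. Adding the same case distinction to your argument would make it complete.
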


 This was previously only known when $n\geq \phi(d)$ for some function $\phi:\N\to\N$ with superexponential growth
(see \cite[Example 5.5]{bwtight}).

\bigskip
It has been understood since \cite{BKC} that it is better to consider a strengthening of Question \ref{quest} allowing for jet conditions at finitely many points of $\Omega$. (Such jet conditions are taken into account in \cite{bwtight} and \cite{FL}.)
In addition, in \cite{bwtight},
we proposed studying Question~\ref{quest} for
one-parameter families of algebraic varieties rather than for a fixed
algebraic variety. 
Our setup is the following. 

 Let $B$ be a connected smooth projective curve over $\C$. Let $X$ be a smooth projective variety over $\C(B)$ and let  $f:\kX\to B$ be a projective regular model of~$X$. 
We say that~$X$ satisfies the \textit{tight approximation property} if any holomorphic section~$u$ of~$f$ over an open subset $\Omega\subset B(\C)$ can be approximated,  for the compact-open topology,  by algebraic sections that have the same jets as $u$ at finitely many prescribed points of~$\Omega$, at any prescribed finite order (see \cite[Definition~2.1]{bwtight} or Definition~\ref{deftight} below). This property only depends on $X$, not on~$f$ (see \cite[Theorem~3.1]{bwtight}). 
When~$X$ comes, by scalar extension, from a variety~$X_0$ defined over~$\C$, the fibration~$f:\kX\to B$ can be chosen to be the second projection  $X_0 \times B\to B$, and the tight approximation property reduces to a strengthening of Question~\ref{quest} for~$X_0$
that takes into account jet conditions at finitely many points of~$\Omega$. 

Here is the main theorem of this article. In view of the above discussion, it is a generalization of Theorem \ref{th1}.

\begin{thm}
\label{th2}
Let $B$ be a connected smooth projective curve over $\C$.  Any smooth, projective, rationally simply connected variety over $\C(B)$ satisfies the tight approximation property.
\end{thm}

Let us record the following consequence, of which Corollary \ref{corhypersurface} is a special case.

\begin{cor}
\label{corhyptight}
Let $B$ be a connected smooth projective curve over~$\C$.  
Let $d,n\geq 1$ be integers such that $n\geq d^2-1$. 
Any smooth hypersurface of degree~$d$ in $\P^n_{\C(B)}$ satisfies the tight approximation property. 
\end{cor}

We refer to Remarks \ref{remOka1} for a discussion of the relation between the tight approximation property and the Oka-1 and algebraically Oka-1 properties introduced by Alarc\'on--Forstneri\v{c} \cite{AF} and Forstneri\v{c}--L\'arusson \cite{FL}.   For now,  we note that as a consequence of Theorem~\ref{th2},
the algebraically Oka\nobreakdash-1 property, and therefore also the Oka-1 property,
are satisfied by smooth, projective, rationally simply connected varieties over~$\C$ (e.g.\ smooth hypersurfaces of degree~$d$ in~$\P^n_\C$ with $n \geq d^2-1$).

As the tight approximation property incorporates a jet condition, it
implies, as well, the
classical weak approximation property of smooth projective varieties $X$ over~$\C(B)$ (for which see \cite[Section 1]{Hassett}). 
 The weak approximation property is also conjectured to hold whenever~$X$ is rationally connected \cite[Conjecture 2]{HT}; this is known if one imposes jet conditions only at points of~$B$ where~$X$ has good reduction (Hassett--Tschinkel \cite[Theorem~3]{HT}). 

Under the stronger hypothesis that $X$ is rationally simply connected,  Hassett and Starr proved in \cite[Theorem 4.7]{Hassett} that the weak approximation property holds in full generality, without excluding the points of~$B$
where~$X$ has bad reduction.
Our proof of Theorem \ref{th2} is inspired by their argument,  based
on the deformation theory of rational curves and in particular on the Graber--Harris--Starr theorem~\cite{ghs};
in addition, we rely
on a Nash approximation theorem due to Demailly, Lempert and Shiffman \cite[Theorem~1.1]{DLS}.
% \cite{bilskiparusinski} reprouve et généralise Demailly et al.\ et
% souligne que le cas où la source est $\P^1$ est chez van den Dries déjà

Here is a more precise outline of the proof of Theorem \ref{th2}.  Choose a projective regular model $f:\kX\to B$ of $X$. Let $u:\Omega\to \kX(\C)$ be the holomorphic section of~$f$ over $\Omega\subset B(\C)$ to be approximated by algebraic maps.  The Nash approximation result \cite[Theorem~1.1]{DLS} applied to $u$ allows us to ensure that~$u(\Omega)\subset B'(\C)$ for some algebraic curve $B'\subset \kX$ all of whose irreducible components dominate $B$.  Making use of the rational simple connectedness of $X$, we construct a comb $Z\subset X$  whose Zariski closure in $\kX$ contains~$B'$ and hence $u(\Omega)$.  The idea is then to deform~$Z$ to a rational curve on $X$, to simultaneously deform~$u$ so that $u(\Omega)$ now belongs to the Zariski closure in $\kX$ of this rational curve, and to conclude by applying the (known) tight approximation property for rational curves
 (Royden's theorem \cite[Theorem~10]{Royden}, or
 see \cite[Theorem~4.2]{bwtight}).
The hardest aspect of the proof (and one that is absent from \cite{Hassett}) is the need to ensure, by means of careful choices of $\kX$,  $B'$ and~$Z$, that the Zariski closures in $\kX$ of the comb, as well as of its small deformations, are smooth over $B$ in a neighborhood of $u(\Omega)$.  This key property is what allows us to deform $u$ with the comb.

\bigskip
The point of view of tight approximation is useful even if one only wants to prove Theorem \ref{th2} for a 
rationally simply connected variety $X$ over $\C(B)$ that comes,  by extension of scalars, from a variety $X_0$ defined over $\C$. Indeed,  in this case,  the model of $X$ on which the proof takes place may not be the constant fibration $X_0\times B\to B$.

 Even though the article \cite{bwtight} also defines and studies the tight approximation property
in the more general context of real algebraic geometry, we are unable to prove a real counterpart of Theorem \ref{th2} (or of its consequence Theorem~\ref{th1}), because many of our arguments break down in this extended setting; for instance, the Graber--Harris--Starr theorem plays a key role in the proof, but no real analogue of it is known.

\subsection*{Organization of the text}

After recalling general facts concerning rationally simply connected varieties in Section \ref{secRSC} and concerning the tight approximation property in Section \ref{sectight},  we give the proof of Theorem \ref{th2} and Corollary \ref{corhyptight} in Section~\ref{secproof}.

\subsection*{Acknowledgements}

We are grateful to Johan de Jong and to Jason Starr for providing useful references to the literature
and in particular for bringing \cite{minoccheri} to our attention.
We thank Finnur Lárusson for pointing us to \cite[Theorem~10]{Royden},
where the tight approximation property
for~$\P^1_\C$ was first proved.  We also thank Franc Forstneri\v{c} for useful comments. We are pleased to acknowledge the influence of \cite{Hassett} on the ideas developed in this article.

\section{Reminders on rationally simply connected varieties}
\label{secRSC}

\subsection{Definition of rationally simply connected varieties}
\label{defRSC}

Let $k$ be a field of characteristic $0$.  An integral projective variety $X$ over $k$ is \textit{rationally connected} if for any algebraically closed field extension $k\subset l$,  any two general points $x,y\in X(l)$ are in the image of a rational curve $\P^1_l\to X_l$ (see \cite[IV, Definition 3.2.2]{Kollarbook}).  Being rationally connected is a birational invariant of integral projective varieties over $k$. 

We will make use of Kontsevich's moduli spaces of stable maps (see \cite{FuPa} for an introductory reference). 
Here is what we will need to know about them.
Let $X$ be a projective variety over $k$.  For $g, n\geq 0$, let $\oM_{g,n}(X)$ be the coarse moduli space of stable maps from projective connected nodal curves of genus~$g$ with~$n$ marked points to $X$ (see e.g.\,\cite[Theorem 50]{AraKo} applied with $S=\Spec(k)$,  $P=\varnothing$, $Q=\Spec(k^n)$ and varying~$d$).
 It is a countable disjoint union (indexed by the integer~$d$ appearing in \emph{loc.\,cit.})\ of projective varieties over $k$.  Its construction as a quotient of a scheme built out of Hilbert schemes \cite[\S 57]{AraKo} implies the following facts. 
\begin{enumerate}
\item Evaluation at the marked points induces a morphism $\ev:\oM_{g,n}(X)\to X^n$.
\item There is an open subset $\oM^{\textrm{fine}}_{g,n}(X)\subset\oM_{g,n}(X)$ corresponding to stable maps with no automorphisms, and this open subset carries a universal stable map. 
\item There is an open subset $M_{g,n}(X)\subset\oM_{g,n}(X)$ corresponding to stable maps from a smooth curve of genus $g$ with $n$ marked points.
\end{enumerate} 

In addition,  as $\oM_{g,n}(X)$ is the coarse moduli space of the moduli stack $\overline{\mathcal{M}}_{g,n}(X)$ of stable maps (a proper Deligne--Mumford stack,  see \cite[Theorem 3.14]{BM}), the following assertion is a consequence of \cite[Theorem 3.6]{BM}.

\begin{enumerate}
\setcounter{enumi}{3}
\item There exists a forgetful morphism $\oM_{g,n}(X)\to\oM_{g,0}(X)$ whose effect on a stable map is to forget the marked points and to stabilize the resulting map.
\end{enumerate} 

We may now give the following definition.

\begin{defn}
\label{defiRSC}
A smooth projective variety $X$ over $k$ is said to be \textit{rationally simply connected} if there exist an irreducible component $M$ of $\oM_{0,2}(X)$ and irreducible components $(M_e)_{e\geq 3}$ of $\oM_{0,0}(X)$ such that the following assertions hold.
\begin{enumerate}[(i)]
\item 
\label{i}
The variety $M$ intersects $M_{0,2}(X)$ and the evaluation morphism $\ev|_M:M\to X^2$ is dominant with rationally connected generic fiber.
\item 
\label{ii}
For all $e\geq 3$, the variety $M_e$ intersects $M_{0,0}(X)$ and is rationally connected.
\item 
\label{iii}
Fix $e\geq 3$ and an algebraically closed field extension $k\subset l$.  Let $x\in \oM_{0,0}(X)(l)$ be a stable map to $X_l$ whose domain is a comb (see \cite[II, Definition~7.7]{Kollarbook}) with $e$ teeth
and a handle isomorphic to $\P^1_l$.
Assume that the handle is contracted in $X_l$ and that each tooth is a free rational curve giving rise,  when marked by its intersection with the handle and  by some other point, to an $l$-point of~$M\subset \oM_{0,2}(X)$.  Then $x\in M_e(l)$.
\end{enumerate}
\end{defn}

\begin{rmk}
The condition $e\geq 3$ in Definition \ref{defiRSC} ensures that the combs considered in (\ref{iii}) satisfy the stability condition of stable maps.
\end{rmk}

\subsection{Examples of rationally simply connected varieties}
\label{exRSC}

Low degree complete intersections of hypersurfaces in projective space constitute
the main source of rationally simply connected varieties to which our
results can be fruitfully applied.

\begin{thm}[de Jong, Starr \cite{djs}]
\label{th:djs}
Let~$k$ be a field of characteristic~$0$.
Let $c \geq 1$, $n \geq 1$, $d_1,\dots,d_c\geq 2$ be integers such that
$n+1 \geq \sum_{i=1}^c d_i^2$
and $n-c \geq 3$.
Any
smooth complete intersection
 $X \subset \P^n_k$
 of hypersurfaces of degrees~$d_1,\dots,d_c$
is rationally simply connected.
\end{thm}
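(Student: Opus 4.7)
The plan is to unpack the main theorem of de Jong and Starr in terms of Definition~\ref{defiRSC}. I would quote \cite{djs} for the existence of the component $M$ required by~(\ref{i}), construct each $M_e$ for $e \geq 3$ as the irreducible component of $\oM_{0,0}(X)$ containing the combs described in~(\ref{iii}), and verify~(\ref{ii}) by combining the Graber--Harris--Starr theorem with standard smoothing results for combs.

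Under the numerical hypotheses $n+1 \geq \sum_{i=1}^c d_i^2$ and $n - c \geq 3$, the main result of \cite{djs} produces a positive integer $e_0$ and an irreducible component $M \subset \oM_{0,2}(X)$ parameterizing $2$-pointed stable maps of degree $e_0$ such that $M$ meets $M_{0,2}(X)$ and $\ev|_M : M \to X \times X$ is dominant with rationally connected general fiber. This is exactly~(\ref{i}). Let $\ev_1 : M \to X$ denote evaluation at the first marked point. Its generic fiber is rationally connected: since $\ev_1$ factors as $\ev|_M$ followed by the first projection $X \times X \to X$, both of which have rationally connected general fibers (the former by~(\ref{i}), the latter because $X$ itself is rationally connected by Theorem~\ref{th:djs} applied in the trivial case $e = 1$, or equivalently by the hypotheses of \cite{djs}), this follows from the Graber--Harris--Starr theorem \cite{ghs} applied fiber-wise.

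For $e \geq 3$, form the $e$-fold iterated fiber product
\[
N_e := \underbrace{M \times_{X,\ev_1} \cdots \times_{X,\ev_1} M}_{e\text{ copies}}.
\]
It is irreducible, because $\ev_1$ is dominant with geometrically irreducible (indeed rationally connected) generic fiber. A point of $N_e$ is a tuple of $e$ two-pointed rational curves in $M$ sharing a common image under the first evaluation, and such data determine a comb as in~(\ref{iii}) by gluing the teeth to a contracted handle through the common point; this yields a natural morphism $N_e \to \oM_{0,0}(X)$ (obtained by forgetting the auxiliary second marked points and stabilizing). Define $M_e$ to be the irreducible component of $\oM_{0,0}(X)$ containing the image of $N_e$; then~(\ref{iii}) holds by construction. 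The fiber of $N_e \to X$ over a general $x$ is the product $(\ev_1^{-1}(x))^e$ of rationally connected varieties, which is rationally connected, so another application of Graber--Harris--Starr shows that $N_e$ is rationally connected, hence so is its dominant image $M_e$. Finally, a general comb parameterized by $N_e$ is free (its teeth are free rational curves and its handle is contracted), so standard smoothing results for free combs (see \cite[II.7]{Kollarbook}) deform it into a smooth rational curve in $X$, producing a point of $M_e \cap M_{0,0}(X)$ and completing the verification of~(\ref{ii}).

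The principal subtlety is to reconcile the formulation of rational simple connectedness used by de Jong and Starr, which is typically phrased in terms of moduli spaces of sections of quasi-projective fibrations, with the projective formulation of Definition~\ref{defiRSC} that relies on Kontsevich's spaces of stable maps. Once the component $M$ satisfying~(\ref{i}) is extracted from their arguments, the remaining checks reduce to the combinatorics of combs and repeated use of the Graber--Harris--Starr theorem.
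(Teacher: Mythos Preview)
There is a genuine gap in your verification of~(\ref{ii}). The morphism $N_e \to \oM_{0,0}(X)$ that you construct is \emph{not} dominant onto~$M_e$: its image lies in the stratum of combs with~$e$ teeth, which (since each node imposes a codimension-one condition and $\oM_{0,0}(X)$ is smooth at these free combs) has codimension~$e$ in~$M_e$. Rational connectedness of~$N_e$ therefore says nothing about rational connectedness of~$M_e$. Your strategy of deducing~(\ref{ii}) from~(\ref{i}) alone, via the fibre product construction, breaks at exactly this point and I do not see how to repair it without substantial further input.

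The paper does not attempt such a bootstrap. It exploits the full strength of the de~Jong--Starr and Minoccheri analysis: for the specific components $M_e=L_{2e}$ (built from lines rather than from an unspecified degree~$e_0$), one knows that the $2$\nobreakdash-pointed evaluation morphism $\ev|_{M_{e,2}}:M_{e,2}\to X^2$ has rationally connected generic fibre for \emph{every}~$e$, not just for~$e=1$. Graber--Harris--Starr applied to this morphism (the base~$X^2$ being rationally connected) then yields the rational connectedness of~$M_{e,2}$ and hence of its image~$M_e$. In other words, the required citation from~\cite{djs} and~\cite{minoccheri} is not merely~(\ref{i}) but the whole family of statements indexed by~$e$; this is where the numerical hypothesis $n+1\geq\sum d_i^2$ is used in earnest.

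Two smaller points. First, (\ref{iii}) does not hold ``by construction'': one must observe that $\oM_{0,0}(X)$ is smooth at any comb with free teeth (so that a unique component passes through it), and identify that component with~$M_e$; this is the content of property~(4) in the paper's discussion of the~$L_e$. Second, the assignment ``tuple in~$N_e$ $\mapsto$ comb'' is not a morphism as stated, since forming a comb with $e\geq 3$ teeth requires the extra datum of the cross-ratios of the attaching points on the handle; the source should be $N_e \times M_{0,e}$ rather than~$N_e$.
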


A published proof of this theorem can be found in the work of
Minoccheri~\cite{minoccheri}, who also extends it to complete intersections in weighted projective
spaces.
We now briefly recall which are the components~$M$ and~$(M_e)_{e \geq 3}$ produced
by de~Jong and Starr's arguments,
in order to clarify that \cite{minoccheri} proves Theorem~\ref{th:djs}
(as well as its extension to weighted projective spaces
\cite[Theorem~1.2]{minoccheri})
when ``rationally simply connected''
is understood in the sense of
Definition~\ref{defiRSC} above rather than in the sense
of \cite[Definition~2.9]{minoccheri}.

Let~$X$ be as in Theorem~\ref{th:djs}.
By \cite[\textsection3]{minoccheri},
there exists a unique irreducible component $L \subset \oM_{0,1}(X)$
such that a general point of~$L$ parametrises a pointed line in~$X$ and such that
the morphism $\ev|_L: L \to X$ is dominant.
Moreover, the generic fibre of $\ev|_L$
is then geometrically irreducible.
Let~$L_1$ denote the image of~$L$ by the forgetful morphism $\oM_{0,1}(X) \to \oM_{0,0}(X)$.

The line parametrised by
a general point of~$L_1$ is free (see \cite[II, Proposition~3.10]{Kollarbook}).
Applying \cite[Lemma~3.5, Lemma~3.6]{djs}
(see \cite[Proposition~4.3, Proposition~4.6]{Hassett} for a proof in print) therefore shows
that for all $e\geq 2$, there exists a unique irreducible component $L_e\subset \oM_{0,0}(X)$ 
such that
\begin{enumerate}
\item the curves parametrised by~$L_e$ have degree~$e$ in~$\P^n_k$;
\item a general point of~$L_e$ parametrises a free rational curve on~$X$;
\item for any algebraically closed field extension $k\subset l$,
any free map $g:\P^1_l \to X_l$
and any degree $e$ map $m : \P^1_l \to \P^1_l$,
if~$g$ is parametrised by a point of~$L_1(l)$,
then $g\circ m$ is parametrised by a point of~$L_e(l)$;
\end{enumerate}
 in addition, these components automatically satisfy the following property:
\begin{enumerate}
\setcounter{enumi}{3}
\item\label{enum:compirrlditem4} for any algebraically closed field extension $k\subset l$
and any $x \in \oM_{0,0}(X)(l)$,
if the non-contracted irreducible components of the domain of the stable map~$x$
are free rational curves giving rise to $l$\nobreakdash-points of $L_{e_1}, \dots, L_{e_s}$,
then $x \in L_{e_1 + \dots + e_s}(l)$. 
\end{enumerate}

For all $e \geq 1$, set $M_e:=L_{2e}$  and
let $M_{e,2}$ denote
the unique irreducible component of
$\oM_{0,2}(X)$ that dominates~$M_e$ via the forgetful morphism
$\oM_{0,2}(X) \to \oM_{0,0}(X)$.
Finally, set $M:=M_{1,2}$.
Assertion~(iii) of Definition~\ref{defiRSC}
is a consequence of~\eqref{enum:compirrlditem4} (with all~$e_i=2$).
Let us discuss the other assertions.

According to \cite[Proposition~4.2]{minoccheri},
there exists a unique irreducible component $M' \subset \oM_{0,2}(X)$
parametrising conics on~$X$ such that the morphism $\ev|_{M'}:M'\to X^2$ is dominant.
Applying \cite[Proposition~5.2, Proposition~5.3, Claim on p.~505]{minoccheri},
we see
that two general points of~$X$ can be connected by a chain of two free lines.
Such a chain corresponds, on the one hand, to a point of~$M'$
(when marked by the two general points),
and, on the other hand,
to a point of~$L_2$
(thanks to~\eqref{enum:compirrlditem4},
as any line going through a general point of~$X$  corresponds to a point of~$L_1$).
This point of~$L_2$ does not belong to any other
 irreducible component
of~$\oM_{0,0}(X)$
(see \cite[Theorem~51]{AraKo}).
We deduce that the image of~$M'$ by the forgetful map $\oM_{0,2}(X) \to \oM_{0,0}(X)$
is contained in~$L_2$.
Having the same dimension as~$L_2$
(as a consequence of \cite[Theorem~51]{AraKo}), it is therefore equal to it;
hence $M=M'$.
% le point est que la fibre au-dessus du point générique de l'image de $M$ est de dimension 2
% et d'autre part Th.51 chez [AraKo] montre que dim(M)=2+dim(L_2).
Thus $\ev|_M$ is dominant, as required by~(i) of Definition~\ref{defiRSC}.

As explained in \cite[pp.~507--508]{minoccheri},
the generic fibre of $\ev|_{M_{e,2}}:M_{e,2} \to X^2$ 
is rationally connected
for all $e \geq 1$.  This implies, on the one hand, the remaining part of
assertion~(i) from Definition~\ref{defiRSC},
and on the other hand, the rational connectedness of~$M_{e,2}$
(by \cite[Corollary~1.3]{ghs}) and hence that of~$M_e$ for all~$e$,
thus completing the proof of assertion~(ii) from Definition~\ref{defiRSC}.

\section{Generalities on the tight approximation property}
\label{sectight}

\subsection{Definition}

Let us first recall the precise definition of tight approximation.

\begin{defn}
\label{deftight1}
Let $f:\kX\to B$ be a flat morphism of smooth projective varieties over~$\C$ with $B$ a connected curve. We say that $f$ satisfies the tight approximation property if for any open subset~$\Omega\subset B(\C)$, any $m\geq 0$,
any $b_1,\dots, b_m\in\Omega$, any~$r\geq 0$ and any holomorphic section $u:\Omega\to \kX(\C)$ of $f:\kX(\C)\to B(\C)$ over~$\Omega$, there is a sequence $(s_n)_{n\geq 0}$ of algebraic sections of $f$ with the same $r$-jets as $u$ at the~$b_i$ such that $(s_n|_{\Omega})_{n\geq 0}$ converges to $u$ for the compact-open topology.
\end{defn}

\begin{rmk}
\label{remweaktopo}
In Definition \ref{deftight1}, it would be equivalent to require that $(s_n|_{\Omega})_{n\geq 0}$ converge to $u$ in the weak $\ci$ topology (as we do in \cite[Definition 2.1]{bwtight}), by the Cauchy estimates (see \cite[Proposition 2.2]{bwtight}).
\end{rmk}

The tight approximation property only depends on the generic fiber of $f$---in fact, only on its birational equivalence class (see \cite[Theorem 3.1]{bwtight}).  This justifies the next definition (see \cite[Definition~3.3]{bwtight}).

\begin{defn}
\label{deftight}
Let $B$ be a connected smooth projective curve over $\C$.  A smooth variety $X$ over $\C(B)$ satisfies the tight approximation property if so does one (equivalently, any) of its projective regular models $f:\kX\to B$.
\end{defn}

When the points $b_1,\dots, b_m$ and the integer $r$ are clear from the context, we will informally say that two holomorphic sections $u_1$ and $u_2$ of $f$ over $\Omega$ are close in the sense of tight approximation if they have the same $r$-jets at the $b_i$ and if they are close for the compact-open topology.

\begin{rmks}
\label{remOka1}
(i) For smooth projective varieties defined over $\C$, the tight approximation property is closely related to the \textit{algebraically Oka-1 property} introduced by Forstneri\v{c} and L\'arusson in \cite{FL}. More precisely, if $X$ is a smooth projective variety over~$\C$, then~$X$ is algebraically Oka-1 in the sense of \cite[Definition~1.5]{FL} if and only if~$X_{\C(B)}$ satisfies the tight approximation property for all connected smooth projective curves~$B$ over $\C$. This equivalence results at once from the definitions and from the following three facts.

\begin{enumerate}[(a)]

\item In Definition \ref{deftight1}, one can assume that $\Omega\subsetneq B(\C)$. Indeed, if $\Omega=B(\C)$, then  the holomorphic section $u:\Omega\to B(\C)$ is already algebraic, by GAGA.

\item In \cite[Definition~1.5]{FL} (and using the notation introduced there),  when the variety $X$ is projective, the hypothesis that $K$ be Runge in $R$ is superfluous.  The argument appears in the proof of \cite[Corollary 1.8]{FL}. 
One reduces to the case where $K$ is Runge by enlarging~$K$ to ensure that it is smoothly bounded, and by removing from $R$ one point in each connected component of $R\setminus K$.

\item If $X_{\C(B)}$ satisfies the tight approximation property (in the sense of Definition~\ref{deftight}) for some connected smooth projective curve $B$ over $\C$, then $X$ is rationally connected, by
\cite[Corollary~2.16]{Hassett} and \cite[Remark 4.4 (4)]{Debarre}, and hence simply connected (see \cite[Corollary 4.29]{Debarre}). As a consequence, the condition that $f$ and $F$ be homotopic in \cite[Definition 1.5]{FL} is automatically satisfied in this case, by obstruction theory \cite[37.12]{Steenrod}, as an affine algebraic Riemann surface has no cohomology in degree $\geq 2$.
\end{enumerate}

(ii)
We recall that any smooth projective variety over~$\C$ that is algebraically Oka-1
in the sense of \cite[Definition~1.5]{FL}
is also \emph{Oka-1} in the sense of \cite[Definition~1.1]{AF} (see \cite[Proposition~1.9]{FL}).
In contrast with Remark~\ref{remOka1}~(i), there exist smooth projective varieties $X$ over $\C$ that are
Oka-1 but that are not rationally connected (e.g.\ elliptic curves, see \cite[Corollary 2.6]{AF}), and hence such that~$X_{\C(B)}$ does not satisfy the tight approximation property for any connected smooth projective curve~$B$ over $\C$.
\end{rmks}

\subsection{Perturbing multisections}

The following proposition will allow us to perturb a multisection of $f:\kX\to B$ in the sense of tight approximation. We let $\Res_{k'/k}$ denote Weil restriction of scalars
 from~$k'$ to~$k$.

\begin{prop}
\label{lemperturb}
Let $f:\kX\to B$ and $\pi:B'\to B$ be dominant morphisms of smooth connected projective varieties over~$\C$.  Assume that $B$ and $B'$ are curves
and that the generic fiber $X$ of $f$ is rationally connected.
Set $k=\C(B)$ and $k'=\C(B')$.
  Let $U\subset \Res_{k'/k}(X_{k'})$ be a dense open subset
and $K \subsetneq B'(\C)$ be a compact subset.
Fix~$m \geq 0$, $r \geq 0$ and $b_1,\dots, b_m\in K$.
For any morphism $g:B'\to\kX$ with~$f\circ g=\pi$,
there exists
 a morphism $\hat{g}:B'\to\kX$ with $f\circ \hat{g}=\pi$, with the same $r$-jets as $g$ at the~$b_i$, such that $\hat{g}|_K$ is arbitrarily close to $g|_K$ for the compact-open topology, and such that the generic point of $\hat{g}(B')$ belongs to $U(k)\subset \Res_{k'/k}(X_{k'})(k)=X(k')$.
\end{prop}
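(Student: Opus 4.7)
The plan is to deform $g$ by attaching very free rational curves to the image of its associated section $\tilde g : B' \to \tilde \kX := \kX \times_B B'$ of the base change $\tilde f : \tilde \kX \to B'$, in fibers of $\tilde f$ over points of $B'(\C) \setminus K$, and then to smooth the resulting comb inside $\tilde \kX$ to obtain a family of nearby sections. Selecting a sufficiently generic member of this family then yields the required $\hat g$.

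Concretely, I would first pick points $b_0^{(1)}, \dots, b_0^{(N)} \in B'(\C) \setminus (K \cup \{b_1, \dots, b_m\})$ (for $N$ large, to be fixed later), at which $\pi$ is étale and whose images in $B$ lie in the open locus over which $f$ is smooth with rationally connected fiber; this is a dense open condition that can be met since $K \subsetneq B'(\C)$. In each fiber $\tilde \kX_{b_0^{(j)}}$ I would choose a very free rational curve $R_j$ passing through $\tilde g(b_0^{(j)})$ and form the comb $\tilde C = \tilde g(B') \cup \bigcup_{j=1}^N R_j \subset \tilde \kX$ with handle $\tilde g(B')$ and teeth $R_j$. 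For $N$ sufficiently large, the normal bundle of $\tilde C$ becomes globally generated and a standard comb-smoothing argument produces a family of sections $\tilde g_t : B' \to \tilde \kX$ of $\tilde f$, parameterized by a smooth irreducible $\C$-variety~$T$, degenerating to the stable map of $\tilde C$ as $t$ tends to a boundary point $t_0$ in a compactification of $T$. On the compact set $K$ (which avoids all $b_0^{(j)}$), this forces $\tilde g_t|_K \to \tilde g|_K$ for the compact-open topology as $t \to t_0$. Imposing the $r$-jet conditions at the $b_i$ then cuts out a smooth subvariety $T' \subset T$ of positive dimension for $N$ large, inside which the approximation on $K$ near $t_0$ is preserved.

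Each section $\tilde g_t$ determines a $k'$-point $[\tilde g_t] \in X(k') = \Res_{k'/k}(X_{k'})(k)$. The decisive step is to show that, as $t$ varies over $T'$, the points $[\tilde g_t]$ sweep out a Zariski-dense subset of the irreducible component of $\Res_{k'/k}(X_{k'})$ containing $[g]$. I would establish this by an infinitesimal argument: varying each tooth $R_j$ within its $(\dim X)$-dimensional moduli of very free rational curves through the fixed basepoint $\tilde g(b_0^{(j)})$ produces, via the smoothing, independent directions of deformation of $[\tilde g_t]$ inside $\Res_{k'/k}(X_{k'})$, which for $N$ large should span the Zariski tangent space at $[g]$ of the ambient component. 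Granted this dominance, the preimage of $U$ in $T'$ is a Zariski-dense open subset, hence analytically open and dense in $T'(\C)$, and therefore contains points arbitrarily close to $t_0$; any such $t$ yields a morphism $\hat g : B' \to \kX$ satisfying all three required conditions. The main obstacle is precisely the dominance claim, which rests on a careful identification of the infinitesimal deformations coming from varying the teeth with tangent directions in $\Res_{k'/k}(X_{k'})$ at $[g]$, ultimately relying on the very freeness of the teeth supplied by the rational connectedness of $X$.
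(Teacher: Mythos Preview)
Your comb-smoothing strategy is essentially the same as the paper's, and the first two paragraphs are sound in outline. The genuine gap lies in your ``decisive step''.

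You propose to show that the family $\{[\tilde g_t]\}_{t\in T'}$ is Zariski dense in $\Res_{k'/k}(X_{k'})$ by an infinitesimal argument: varying the teeth should produce enough directions to span the Zariski tangent space of $\Res_{k'/k}(X_{k'})$ at $[g]$. But this tangent space is a $k$-vector space (of dimension $[k':k]\dim X$ over $k$), whereas your parameter space $T'$ is a $\C$-variety and the assignment $t\mapsto [\tilde g_t]$ is only a map of sets $T'(\C)\to\Res_{k'/k}(X_{k'})(k)$; there is no morphism of schemes here whose differential you could compute. Viewed over~$\C$, the space of sections of $\tilde f$ (or of $\Res_{B'/B}(\tilde\kX)\to B$) is infinite-dimensional, and a finite number~$N$ of teeth gives only a finite-dimensional family of $\C$-deformations; there is no value of~$N$ for which these ``span'' anything relevant. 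So the dominance claim, as formulated, does not make sense and cannot be salvaged by taking~$N$ large.

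The paper circumvents this by replacing your infinite-dimensional target with a finite-dimensional one. Fix a general closed point $b\in B(\C)$, away from $\pi(K)$ and the~$\pi(b_i)$, and consider the evaluation morphism
\[
\varphi:L^0\longrightarrow\prod_{b'\in\pi^{-1}(b)}\kX_b
\]
sending a smoothed section to its values over the points of $\pi^{-1}(b)$. The teeth are attached at points of $B'(\C)\setminus(K\cup\pi^{-1}(b))$, and the normal bundle of the comb is arranged so that $H^1(C,N_{C/\kX'}\otimes I_{\pi^{-1}(b)\cup\beta})=0$; this makes the differential of~$\varphi$ at~$[C]$ surjective, hence~$\varphi$ dominant. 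Since the Zariski closure~$\kF$ of $\Res_{k'/k}(X_{k'})\setminus U$ inside $\Res_{B'/B}(\kX')$ has $\kF_b\subsetneq\prod_{b'\in\pi^{-1}(b)}\kX_b$ for~$b$ general, a general point of~$L^0$ gives a section not landing in~$\kF_b$, hence not in~$\kF$, hence in~$U(k)$. This is the missing idea: you should test membership in~$U$ at a single general fibre rather than attempt to dominate the Weil restriction itself.
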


\begin{proof}
After replacing~$\kX$ with~$\kX \times \P^3_\C$ and composing~$f$ with the first projection morphism
$\kX \times \P^3_\C \to \kX$, we may assume that $\dim(X)\geq 3$.

Fix a desingularisation $\nu:\kX' \to \kX \times_B B'$ of~$\kX \times_B B'$
that induces an isomorphism over the generic point of~$B$ (using~\cite{Hironaka}),
and let $f':\kX' \to B'$ denote its composition
with the base change of~$f$.
Fix the morphism~$g$.
The strict transform $g' :B' \to \kX'$ of $(g,\Id):B' \to \kX \times_B B'$
is then a section of~$f'$.

Let $\beta \subset B'$ denote the finite subscheme whose underlying set is $\{b_1,\dots,b_m\}$
and each of whose connected components has degree~$r+1$ over~$\C$
(so that the collection of the $r$\nobreakdash-jets of~$g$ at the~$b_i$ can be identified with $g|_\beta$).

Let $F = \Res_{k'/k}(X_{k'}) \setminus U$.  Let $\kF \subset \Res_{B'/B}(\kX')$ denote the Zariski closure
of~$F$.
The fibre~$\kF_b$ of $\kF \to B$ above a general point $b \in B(\C)$ is a strict closed subset
of the smooth projective rationally connected variety $\prod_{b' \in \pi^{-1}(b)} \kX_b$,
where
$\kX_b=f^{-1}(b)$.
We henceforth fix such a point $b \in B(\C)$, distinct from the $\pi(b_i)$.

We would like to deform~$g'$ to a $\hat g'$ such that $(\hat g'(b'))_{b' \in \pi^{-1}(b)} \notin \kF_b$.
There might be obstructions to this deformation problem.
To bypass them, we employ a
technique introduced by Graber, Harris and Starr in~\cite[\textsection2.1]{ghs}
(see also~\cite[\textsection1]{Kollarspecialization}),
which consists in attaching to the curve~$g'(B')$ sufficiently many
free rational curves
contained in the fibres of~$f'$, at general points
of~$g'(B')$, along general tangent directions,
to obtain a comb that can then be deformed to the desired section.
To be precise, applying
\cite[Remark~9 and Proposition~12]{Kollarspecialization}
as in the proof of \cite[Theorem~16]{Kollarspecialization},
and noting that $\dim(X) \geq 3$,
we see that for any infinite subset $P \subset B'(\C)$,
there exist an integer~$d$, pairwise distinct points $p_1,\dots,p_d \in P$,
smooth rational curves $T_i \subset f'^{-1}(p_i)$ for $i \in \{1,\dots,d\}$,
such that the local complete intersection curve $C=g'(B') \cup T_1 \cup \dots \cup T_d \subset \kX'$ is
connected and such that the following two conditions hold,
where for any
closed subscheme $S \subset B'$, we denote by~$I_{S}$ the ideal sheaf
of~$S$ viewed as a closed subscheme of~$C$ via~$g'$:
\begin{enumerate}
\item
\label{cond:vbgg}
the vector bundle $N_{C/\kX'} \otimes_{\sO_C} I_\beta$ on~$C$ is generated by its global sections;
\item
\label{cond:h1cnivanishes}
the group $H^1(C,N_{C/\kX'} \otimes_{\sO_C} I_{\pi^{-1}(b) \cup \beta })$ vanishes.
\end{enumerate}
We apply this assertion with $P = B'(\C)\setminus (K \cup \pi^{-1}(b))$.

Let $L \subset \Hilb_{\kX'/\C}$ be the Hilbert scheme parametrising the closed subschemes of~$\kX'$
that contain the (non-reduced) finite subscheme $g'(\beta) \subset \kX'$.
We recall that the tangent space $T_{[C]}L$ to~$L$ at the point $[C] \in L(\C)$ corresponding to~$C$ can be identified
with
$H^0(C,N_{C/\kX'} \otimes_{\sO_C} I_{ \beta })$,
and that the vanishing of
$H^1(C,N_{C/\kX'} \otimes_{\sO_C} I_{\beta })$,
which follows from condition~\eqref{cond:h1cnivanishes}, forces~$L$ to be smooth at~$[C]$.
These two facts result, for example, from \cite[Proposition~3.14]{starrarithmeticoverfunctionfields}
and \cite[Proposition~4.5.2]{sernesi}.

Let $L^0 \subset L$ be a smooth connected open neighbourhood of~$[C]$.
Let $W \subset L^0 \times \kX'$ be the tautological closed subscheme flat over~$L^0$.

\begin{lem}
\label{lem:generalpointofl0}
The fibre of $W \to L^0$ above a general point of~$L^0(\C)$
is a section of~$f'$.  When viewed as an element of~$X(k')$, it belongs to the subset~$U(k)$.
\end{lem}

\begin{proof}
First-order deformations of~$C$ as a closed subscheme of~$\kX'$ that contains~$g'(\beta)$ are parametrised
by $T_{[C]}L^0=H^0(C,N_{C/\kX'} \otimes_{\sO_C} I_{ \beta })$.
Condition~\eqref{cond:vbgg} ensures that
the
 first-order deformations of~$C$
which do not smooth a given node of~$C$ form a linear subspace of~$T_{[C]}L^0$ of positive codimension.
Thus, a general first-order deformation of~$C$ smoothes all of the nodes of~$C$.  It follows that
the generic fibre of $W \to L^0$
is smooth.  As it is connected
(by \cite[Théorème~12.2.4~(vi)]{EGA43}) and as it has degree~$1$ over~$B'$, it must be a section of~$f'$.
% on utilise secrètement qu'étant donnée une famille plate de courbes et un
% fibré en droites sur l'espace total, le degré du fibré sur les fibres est
% localement constant (ce n'est pas une trivialité).

It remains to verify the second assertion of Lemma~\ref{lem:generalpointofl0}.
By the conclusion of the previous paragraph, the 
restriction $\rho:W \to L^0 \times B'$  of $(\Id,f') : L^0 \times \kX' \to L^0 \times B'$
to~$W$ is a birational morphism---it even induces an isomorphism over the generic point of~$L^0$.
As~$\rho$ is proper
and as its fibres over $\{[C]\} \times \pi^{-1}(b)$
are finite,
there exists
an open neighbourhood $V \subset L^0 \times B'$
of   $\{[C]\} \times \pi^{-1}(b)$
such that the morphism  $\rho:\rho^{-1}(V)\to V$ is finite
(see \cite[Corollaire~4.4.11]{EGA31})
and is therefore an isomorphism
(see \cite[Lemme~8.12.10.1]{EGA43}).

The subset $L^{1} \subset L^0$ of all $x \in L^0$ such that $(x,b') \in V$
for all $b' \in \pi^{-1}(b)$ is an open neighbourhood of~$[C]$.  After
replacing~$L^0$ with~$L^{1}$ and~$V$ with $V \cap (L^{1} \times B')$, we
may assume that $L^0 \times \pi^{-1}(b) \subset V$.
Composing this inclusion
with  $\rho^{-1} : V \isoto \rho^{-1}(V) \subset W$ and with the projection $W\to \kX'$
yields a morphism $L^0 \times \pi^{-1}(b) \to \kX_b$ (recall that $\kX_b=\kX'_{b'}$ for $b'\in \pi^{-1}(b)$), and hence
a morphism $\varphi:L^0 \to \prod_{b' \in \pi^{-1}(b)} \kX_b$.

Setting $\sF=N_{C/\kX'} \otimes_{\sO_C} I_{ \beta }$,
the differential $T_{[C]}L^0 \to \prod_{b' \in \pi^{-1}(b)} T_{g(b')}\kX_b$
of~$\varphi$ at~$[C]$ can be identified with the restriction map
$H^0(C,\sF) \to H^0(\pi^{-1}(b), \sF|_{\pi^{-1}(b)})$,
which, by
condition~\eqref{cond:h1cnivanishes}, is surjective.
As~$\varphi$ is a morphism between smooth varieties, we deduce that~$\varphi$ is smooth
at~$[C]$, hence that~$\varphi$ is dominant
and, in particular, that~$\varphi(L^0) \not\subset \kF_b$.
The second assertion of Lemma~\ref{lem:generalpointofl0} follows.
\end{proof}

Let~$J$ be a finite set. For each $j \in J$, let $K'_j \subseteq K$ and $\Xi_j \subseteq \kX'(\C)$
be a compact and an open subset, respectively,
such that~$g'$ maps~$K'_j$ into~$\Xi_j$.
The complement of the image of
\begin{align*}
W(\C) \cap \big(L^0(\C) \times \bigcup_{j \in J} \big(f'^{-1}(K'_j) \cap (\kX'(\C) \setminus \Xi_j)\big)\big)
\end{align*}
by the projection $W(\C) \to L^0(\C)$ 
is an open subset of~$L^0(\C)$
which is nonempty as it contains~$[C]$; it is therefore Zariski dense.
In view of
Lemma~\ref{lem:generalpointofl0},
the fibre of $W \to L^0$ above a Zariski general point of this open subset is a section of~$f'$
 that maps~$K'_j$ into~$\Xi_j$ for all $j \in J$,
that has the same
$r$\nobreakdash-jets as~$g'$ at the~$b_i$,
and that gives rise to a point of~$U(k)$.
Composing it with the projection $\kX' \to \kX$
 yields the desired  morphism~$\hat g$.
\end{proof}

\subsection{Nash approximation}

The next proposition, which is an application of the main theorem of Demailly, Lempert and
Shiffman~\cite{DLS}, states that a holomorphic section $u:\Omega\to\kX(\C)$
as above is always close to a Nash section (that is, to a holomorphic section taking its values in an algebraic curve) in the sense of tight
approximation.

\begin{prop}
\label{propDLS}
Let $f:\kX\to B$ be a surjective
flat morphism of smooth projective varieties over~$\C$ with $B$ a connected curve.
Let $\Omega' \subset \Omega \subsetneq B(\C)$
be open subsets such that~$\Omega'$ is  relatively compact in~$\Omega$.
For any $m\geq 0$,
any $b_1,\dots, b_m\in\Omega'$, any~$r\geq 0$ and any holomorphic section $u:\Omega\to \kX(\C)$ of $f:\kX(\C)\to B(\C)$ over~$\Omega$, there is a sequence $(\hat u_n)_{n\geq 0}$ of holomorphic sections of $f$ over~$\Omega'$,
with the same $r$-jets as $u$ at the~$b_i$, such that
 $(\hat u_n)_{n\geq 0}$
 converges to~$u|_{\Omega'}$ for the
compact-open topology and such that for all $n \geq 0$,
the Zariski closure of
$\hat{u}_n(\Omega')$ in~$\kX$ has dimension~$1$.
\end{prop}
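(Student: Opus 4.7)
The plan is to apply the Nash approximation theorem of Demailly, Lempert and Shiffman \cite[Theorem~1.1]{DLS} to $u$ viewed as a holomorphic map to~$\kX$ (ignoring the section property for the moment), and then to convert the resulting Nash \emph{maps} into Nash \emph{sections} by a reparametrisation of the base. First, since~$\Omega'$ is relatively compact in $\Omega\subsetneq B(\C)$, I would choose an open subset $\Omega''$ with $\overline{\Omega'}\subset\Omega''\subset\overline{\Omega''}\subset\Omega$ containing $b_1,\dots,b_m$, then pick one point in each connected component of $B(\C)\setminus\overline{\Omega''}$, and set $S:=B\setminus T$, where~$T$ is this finite set of points. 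Then~$S$ is an affine Zariski open of~$B$, and~$\Omega''$ is Runge in~$S(\C)$ since no connected component of $S(\C)\setminus\overline{\Omega''}$ is compact.

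Next, I apply \cite[Theorem~1.1]{DLS} to $u|_{\Omega''}:\Omega''\to\kX$, prescribing the $r$\nobreakdash-jets at $b_1,\dots,b_m$ (jet interpolation at finitely many points is standard in the $L^2$\nobreakdash-machinery underlying~\cite{DLS}). This produces a sequence of Nash maps $v_n:\Omega''\to\kX$ with the same $r$\nobreakdash-jets as~$u$ at each $b_i$ and converging to $u|_{\Omega''}$ uniformly on compact subsets. Set $h_n:=f\circ v_n:\Omega''\to B$. Since $f\circ u=\Id$, the map~$h_n$ converges uniformly on $\overline{\Omega'}$ to the inclusion $\overline{\Omega'}\hookrightarrow B$, and its $r$\nobreakdash-jet at each $b_i$ coincides with that of~$\Id_B$. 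By the argument principle, for large~$n$, $h_n$ restricts to a biholomorphism from some open neighbourhood $W_n\subset\Omega''$ of~$\overline{\Omega'}$ onto an open neighbourhood of~$\overline{\Omega'}$ in~$B(\C)$; I then set $\hat u_n:=v_n\circ h_n^{-1}|_{\Omega'}$. By construction $f\circ\hat u_n=\Id_{\Omega'}$, so $\hat u_n$ is a holomorphic section of~$f$ over~$\Omega'$.

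To verify the remaining properties: $\hat u_n\to u|_{\Omega'}$ in the compact-open topology follows from $v_n\to u$ and $h_n^{-1}\to\Id$ uniformly on compacts; the $r$\nobreakdash-jets of~$\hat u_n$ at~$b_i$ match those of~$u$, because~$h_n^{-1}$ has trivial $r$\nobreakdash-jet relative to~$\Id$ at~$b_i$ while the $r$\nobreakdash-jet of~$v_n$ at~$b_i$ matches that of~$u$; and the Nash property means the graph of~$v_n$ is contained in an algebraic $Z_n\subset S\times\kX$ of dimension~$1$, so its projection to~$\kX$ is contained in an algebraic curve, hence so is $\hat u_n(\Omega')\subset v_n(\Omega'')$, which is visibly one-dimensional as an analytic set. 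The main obstacle I anticipate is the jet-prescription enhancement of \cite[Theorem~1.1]{DLS}; if this is not directly available, a fallback is to apply \cite{DLS} without jets and then correct the $r$\nobreakdash-jets at each~$b_i$ by a local gluing using the holomorphic submersivity of~$f$ at each~$u(b_i)$ (which holds because $f\circ u=\Id$ forces $df_{u(b_i)}$ to be surjective). The delicate point of this fallback is preserving the Nash property under the local correction.
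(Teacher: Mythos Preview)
Your approach matches the paper's: apply Demailly--Lempert--Shiffman to obtain Nash maps into~$\kX$, then compose with the inverse of $f\circ v_n$ to recover sections. Your explicit worry about jet interpolation dissolves because the paper invokes \cite[Corollary~1.7]{DLS}, which already prescribes jets at finitely many points for maps out of a Stein space (here~$\Omega$, an open Riemann surface), so neither your Runge construction nor your fallback is needed; and for the reparametrisation step, rather than an appeal to the argument principle, the paper upgrades compact-open convergence to weak~$\sC^1$ convergence via the Cauchy estimates, uses a stability result for embeddings \cite[Chapter~2, Theorem~1.4]{Hirsch} to obtain injectivity of $f\circ v_n$ on a compact neighbourhood of~$\overline{\Omega'}$, and checks $\Omega'\subset(f\circ v_n)(\Omega'')$ via Brouwer's fixed-point theorem.
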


\begin{proof}
Fix~$m$, $b_1,\dots,b_m$, $r$ and~$u$.
Let~$K$ denote the closure of~$\Omega'$ in~$\Omega$.
Choose a relatively compact open neighbourhood~$\Omega'' \subset \Omega$ of the compact subset $K \subset \Omega$.

As~$\Omega$ is Stein, being an open Riemann surface
(Behnke and Stein~\cite{BS}, see also \cite[V, \textsection1.5]{Stein}),
we can apply \cite[Corollary~1.7]{DLS}
and obtain
a sequence~$(\hat v_n)_{n \geq 0}$ of holomorphic maps $\hat v_n:\Omega'' \to \kX(\C)$,
with the same $r$\nobreakdash-jets as~$u$ at the~$b_i$, such that~$(\hat v_n)_{n \geq 0}$ converges
to~$u|_{\Omega''}$ for the compact-open topology and such that for all~$n \geq 0$, the Zariski closure
of~$\hat v_n(\Omega'')$ in~$\kX$ has dimension~$1$.

The map~$\hat v_n$ need not be a section of~$f$.
To correct this and thus complete the proof of Proposition~\ref{propDLS},
it suffices to verify that for $n\gg 0$, the
map $(f \circ \hat v_n)^{-1}(\Omega') \to \Omega'$
obtained by restricting $f \circ \hat v_n : \Omega'' \to B(\C)$
is biholomorphic.  Composing~$\hat v_n$
with the inverse of this map will then yield the desired holomorphic sections~$\hat u_n$ of~$f$.

The sequence $(f \circ \hat v_n)_{n\geq 0}$ converges to the inclusion $\Omega'' \subset B(\C)$ for the compact-open topology,
and therefore also for the weak~$\sC^1$ topology (see Remark \ref{remweaktopo}).
Choosing a compact neighbourhood~$K'$ of~$K$ in~$\Omega''$
with~$\sC^1$ boundary
and applying \cite[Chapter~2, Theorem~1.4]{Hirsch} to the
inclusion $K' \subset B(\C)$,
we see that the maps~$(f \circ \hat v_n)|_{K'}$  are injective for $n \gg 0$. 
After shrinking~$\Omega''$, we may therefore
assume that $f \circ \hat v_n$ is injective for  $n \gg 0$.
The map $f \circ \hat v_n$ then induces a biholomorphic map
between~$\Omega''$ and the open subset 
 $(f \circ \hat v_n)(\Omega'') \subset B(\C)$ for $n \gg 0$
(see \cite[Chapter~1, Corollary~2.4 and Corollary~2.5]{forster}).

It remains to check that $\Omega' \subset (f \circ \hat v_n)(\Omega'')$
for $n \gg 0$.
As~$K$ is compact and contains~$\Omega'$ and as $(f \circ \hat v_n)(\Omega'')$ is open,
it suffices to see that any  $b \in K$
belongs to~$(f \circ \hat v_n)(\Omega'')$ when~$n$ is larger than a bound that can depend on~$b$.
After choosing a local chart for~$B(\C)$ around~$b$,  we are reduced to
remarking that if $\iota:\Dbar \hookrightarrow \C$ denotes the inclusion of the closed unit disk
and $\delta:\Dbar \to \C$ is continuous and satisfies $\delta(\Dbar)\subset \Dbar$, then $0 \in (\iota+\delta)(\Dbar)$,
by Brouwer's fixed-point theorem applied to~$-\delta$.
% or apply \cite[XII, Lemma~1.1]{Conway}.
\end{proof}

\section{Proof of the main theorem}
\label{secproof}

In this section we prove Theorem~\ref{th2} and Corollary~\ref{corhyptight}.

\begin{proof}[Proof of Theorem~\ref{th2}]
We proceed in several steps.

\begin{Step}
\label{stepmodel}
Fixing a model of $X$ and some (multi)sections of it.
\end{Step}

Define $k:=\C(B)$. Let $X$ be a smooth, projective, rationally simply connected variety over $k$.
If~$\dim(X)=1$, then $X$ is a conic, and hence is isomorphic to $\P^1_k$ by Tsen's theorem.  It therefore satisfies the tight approximation property by \cite[Theorem 4.2]{bwtight}. We henceforth assume that $\dim(X)\geq 2$.

Fix a projective regular model $f:\kX\to B$ of $X$.  Choose an open subset $\Omega\subset B(\C)$,  points $b_1,\dots, b_m\in \Omega$, an integer $r\geq 0$ and a holomorphic section $u:\Omega\to\kX(\C)$ of~$f:\kX(\C)\to B(\C)$ over $\Omega$ (as in Definition~\ref{deftight1}).  To prove Theorem \ref{th2}, we must construct a section of $f$ that has the same $r$-jets as $u$ at the $b_i$ and whose restriction to $\Omega$ is close to $u$ for the compact-open topology.

If $\Omega=B(\C)$, then $u$ is itself algebraic by GAGA. Consequently, we
may assume that~$\Omega\subsetneq B(\C)$.
By Proposition \ref{propDLS}, we may then assume, after replacing $\Omega$
with a smaller open subset containing the given compact subsets over which
we want to approximate~$u$, and after perturbing~$u$, that there exists a
possibly reducible algebraic curve $B'\subset\kX$ such that~$u(\Omega)\subset B'(\C)$.

After adding irreducible components to $B'$, we may assume that the degree $d$ of $B'$ over~$B$ is $\geq 3$.
After discarding the vertical components of $B'$, we may assume that the irreducible components of $B'$ all dominate $B$.  Let $(B'_j)_{j\in J}$ be their normalizations and let~${\pi_j:B'_j\to B}$ and~${g_j:B'_j\to\kX}$ be the induced morphisms. 
The field~$k_j:=\C(B'_j)$ is a finite extension of~$k$. We set $d_j:=[k_j:k]$.
Let $x_j\in X(k_j)$ be the generic point of $g_j(B'_j)$.
We also let $\tilde{u}:\Omega\to \sqcup_{j\in J} B'_j(\C)$ denote the strict transform of~$u$.

By the Graber--Harris--Starr theorem \cite[Theorem 1.1]{ghs}, the morphism $f$ admits a section $s:B\to \kX$. We
may assume that~$s(B)\not\subset B'$ (see \cite[IV, Theorem~6.10]{Kollarbook}).
Let~$x\in X(k)$ be the generic point of $s(B)$. 

\begin{Step}
\label{generalmult}
Perturbing the (multi)sections to put them in general position.
\end{Step}

Applying the hypothesis that $X$ is rationally simply connected provides us with projective varieties $M$ and $(M_e)_{e\geq 3}$ over $k$ (see Definition \ref{defiRSC}).  It is a consequence of Definition \ref{defiRSC} (\ref{i}) and of \cite[IV, Theorem 3.11]{Kollarbook} that there exists a dense open subset $V\subset X^2$ over which the fibers of $\ev|_M:M\to X^2$ are all rationally connected. (To be precise, let $\nu:\widetilde{M}\to M$ be a resolution of singularities and apply \cite[IV, Theorem 3.11]{Kollarbook} to the restriction of $\ev|_M\circ\nu$ over
a dense open subset of~$X^2$.)

The variety $M$ generically parametrizes very free rational curves on $X$ (combine Definition \ref{defiRSC} (\ref{i}) and \cite[II, Corollary~3.10.1]{Kollarbook}).  Using \cite[II, Theorem~3.14]{Kollarbook},
 one deduces that the rational curves generically parametrized by $M$ are generically embedded. After shrinking $V$, we may therefore assume that any fiber of~${\ev|_M:M\to X^2}$ over a point of $V$ generically parametrizes generically embedded very free rational curves on~$X$.

Let $\ok$ be an algebraic closure of $k$.
Fix $\ok$-isomorphisms $k_j\otimes_k\ok\isoto \ok^{d_j}$ for $j\in J$.  Consider the smooth projective algebraic variety $Y:=X\times \prod_{j\in J}\Res_{k_j/k}(X_{k_j})$ over~$k$. One has $Y(\ok)=X(\ok)\times\prod_{j\in J}X_{k_j}(k_j\otimes_k\ok)=X(\ok)\times\prod_{j\in J}X(\ok)^{d_j}$. Let $U\subset Y$ be a dense open subset whose $\ok$-points are tuples
$\big(y,(y_{j,1},\dots,y_{j,d_j}\mkern-1mu)_{j\in J}\big)\in Y(\ok)$ satisfying
the following property: for all $j\in J$ and all $1\leq i\leq d_j$, one has $(y,y_{j,i})\in V(\ok)$, and a general $\ok$-point of the fiber of $\ev|_M:M\to X^2$ above $(y,y_{j,i})$ parametrizes a generically embedded rational curve on $X_{\ok}$ that avoids $y_{j',i'}$ for all $(j',i')\neq (j,i)$. Such a dense open subset exists because $\dim(X)\geq 2$.

View $(x,(x_j)_{j\in J})\in X(k)\times\prod_{j\in J}X(k_j)$ as an element of $Y(k)$.  Perturbing the morphisms $s:B\to\kX$ and $(g_j:B'_j\to\kX)_{j\in J}$ by means of Proposition \ref{lemperturb} (and changing the points $x$ and~$(x_j)_{j\in J}$ and the holomorphic map $u$ accordingly, without modifying the~$(\pi_j)_{j\in J}$ and~$\tilde{u}$), we may assume that $(x,(x_j)_{j\in J})\in U(k)$.

\begin{Step}
\label{stepteeth}
Choosing the teeth of a future comb on $X$.
\end{Step}

As $(x,(x_j)_{j\in J})\in U(k)$ (see Step \ref{generalmult}), one has $(x,x_j)\in V(k_j)$.
Let $F_j$ be the variety over~$k_j$ that is the fiber of $\ev|_M$ over $(x,x_j)\in V(k_j)$.
Let $F^0_j\subset F_j$ be any dense open subset consisting of smooth points parametrizing generically embedded very free rational curves on~$X$.
As $F_j$ is rationally connected,  the Graber--Harris--Starr theorem \cite[Theorem~1.2]{ghs} and \cite[IV, Theorem 6.10]{Kollarbook} (both applied to a resolution of singularities of $F_j$) together
show that $F^0_j(k_j)\neq\varnothing$. 

 Fix a point $w_j\in F^0_j(k_j)$.  For $j\in J$,  let $(w_{j,1},\dots, w_{j,d_j})\in F^0_j(\ok)^{d_j}$ denote the image of $w_j\in F^0_j(k_j)$ by the bijection $F^0_j(k_j\otimes_k\ok)\isoto F^0_j(\ok)^{d_j}$. The hypothesis that~$(x,(x_j)_{j\in J})\in U(k)$ also implies that, if the $F^0_j$ have been chosen small enough,  the rational curves on~$X_{\ok}$ parametrized by the $(w_{j,i})_{j\in J,1\leq i\leq d_j}$ are all distinct.

As $F^0_j$ parametrizes generically embedded rational curves, one has $F^0_j\subset \overline{M}^{\textrm{fine}}_{0,2}(X)_{k_j}$. 
It follows that $w_j\in F^0_j(k_j)$ corresponds to a very free
morphism $\alpha_j:\P^1_{k_j}\to X_{k_j}$ that satisfies $\alpha_j(0)=x$
and $\alpha_j(\infty)=x_j$ and is a generic embedding.
Since the~$\alpha_j$, for varying $j \in J$, give rise to
 $\sum_{j\in J}d_j$ distinct rational curves on~$X_\ok$,
 the induced morphism $\alpha:=\sqcup_{j\in J}\alpha_j:\sqcup_{j\in J}\P^1_{k_j}\to X$ is also  an
 embedding away from finitely many points. 

We finally ensure that the morphism $\alpha$ is an embedding over an open neighborhood of $x_j$,  for all $j\in J$. To do so, we do not modify the morphisms $\alpha_j$. Instead, we perturb the point $\infty\in\P^1(k_j)$ by applying Proposition~\ref{lemperturb} to the second projection morphism $\P^1_{\C}\times B'_j\to B'_j$ and to its section given by the point at infinity.  We replace the point~$\infty\in\P^1(k_j)$ by the generic point of the section produced by Proposition~\ref{lemperturb} (but still call it $\infty$ after a reparametrization of~$\P^1_{k_j}$) and the point $x_j$ with its image by~$\alpha_j$. We also modify the morphism $g_j:B'_j\to\kX$ and the holomorphic map~$u:\Omega\to\kX(\C)$ accordingly (without changing~$\pi_j$ and~$\tilde{u}$).

\begin{Step}
\label{stepcomb}
Constructing a comb on $X$ and studying its generic deformation.
\end{Step}

Fix a closed embedding $\iota:\sqcup_{j\in J}\Spec(k_j)\hookrightarrow\P^1_k$.  Gluing~$\sqcup_{j\in J}\P^1_{k_j}$ (the teeth) and~$\P^1_k$ (the handle) transversally by attaching $0\in \P^1(k_j)$ (for $j\in J$) to~$\P^1_k$ thanks to $\iota$ produces a comb $C$ over $k$.  Let $\gamma:C\to X$ be the map contracting the handle to~$x\in X(k)$ and equal to $\alpha$ on the teeth. In view of Definition~\ref{defiRSC}~(\ref{iii}), the point $[\gamma]\in \oM_{0,0}(X)(k)$ belongs to $M_d(k)$ (the condition~$d\geq 3$ was enforced in Step~\ref{stepmodel}).

Since the stable map $\gamma$ has no automorphism (as $d\geq 3$ and the teeth are distinct and generically embedded in $X$), the point $[\gamma]\in \oM_{0,0}(X)(k)$ belongs to the open subset $\oM^{\textrm{fine}}_{0,0}(X)\subset \oM_{0,0}(X)$.
As the teeth of the comb are all very free (see Step \ref{stepteeth}), one computes that the group $H^1(C,\gamma^*T_X)$ vanishes. 
It therefore follows from \cite[Theorem~42]{AraKo} that $M_d$ is smooth at $\gamma$.

Let $(t_1,\dots,t_m)$ be a regular system of parameters of $\sO_{M_d,[\gamma]}$. Sending $t_1,\dots,t_m$ to~$m$ algebraically independent elements of $k[[t]]$ without constant terms yields an inclusion of $k$-algebras $\sO_{M_d,[\gamma]}\subset k[[t]]$.  Endow $k(M_d)$ with the $t$-adic valuation. Let~$\sO_{M_d,[\gamma]}\subset R\subset k(M_d)$ be the associated valuation ring with residue field $k$. The induced morphism $\xi:\Spec(R)\to M_d$ sends the closed point of $\Spec(R)$ to~$[\gamma]$.  

As~$[\gamma]\in \oM^{\textrm{fine}}_{0,0}(X)$, the morphism $\xi$ corresponds to a stable map $\delta:D\to X_{R}$ over~$\Spec(R)$.  The restriction of $\delta$ above the closed point of~$\Spec(R)$ is the comb $\gamma:C\to X$.  
As the restriction of $\gamma$ over a neighborhood of the $(x_j)_{j\in J}$ in~$X$
is a closed embedding, it follows from \cite[Proposition~18.12.7]{EGA44} that there is an open subset $W\subset X_R$ containing the points~$(x_j)_{j\in J}$ of the special fiber such that~$\delta|_{\delta^{-1}(W)}:\delta^{-1}(W)\to W$ is a closed embedding.  The restriction of~$\delta$ above the generic point of~$\Spec(R)$, which is a rational curve by Definition~\ref{defiRSC}~(\ref{ii}),  is therefore generically embedded.

As the reduced closed subscheme $\delta(D)\subset X_R$ is flat over $\Spec(R)$, it yields a morphism~$\zeta:\Spec(R)\to\Hilb_{X/k}$. The element  $[\zeta]\in\Hilb_{X/k}(R)\subset \Hilb_{X/k}(k(M_d))$ gives rise to a rational map ${\psi:M_d\dashrightarrow\Hilb_{X/k}}$.  As the components of $\Hilb_{X/k}$ are proper, there exists a resolution of singularities $\mu:\wM_d\to M_d$ such that $\psi$ lifts to a morphism~${\wpsi:\wM_d\to \Hilb_{X/k}}$.  Let~$\wxi:\Spec(R)\to \wM_d$ be the lift of $\xi$ and let~$p\in\wM_d(k)$ be the image by $\wxi$ of the closed point of $\Spec(R)$.  By construction, one has $\mu(p)=[\gamma]$ and $\wpsi\circ\wxi=\zeta$. 

The closed subscheme $Z\subset X$ parametrized by $\wpsi(p)\in\Hilb_{X/k}(k)$ is the special fiber of~$\delta(D)$.  It follows that $Z=\gamma(C)$ as subsets of~$X$ and that $Z\cap W=\gamma(C)\cap W$ as
subschemes of~$X$ (so that $Z$ is smooth at the~$(x_j)_{j\in J}$). We note that~$Z$ need not be reduced outside of~$W$.

\begin{Step}
\label{stepmodif}
Modifying our model of $X$.
\end{Step}

Let $\kZ\subset\kX$ denote the scheme-theoretic closure of $Z\subset X$.
Let $\Sigma \subset \kZ$ denote the closure of the singular locus of~$Z$.
We noted, at the end of Step~\ref{stepcomb}, that the surface~$\kZ$ is smooth at
the points~$(x_j)_{j\in J}$; hence $\Sigma \cap B'$ is finite.

We may legitimately replace~$\kX$ by its blow-up along any closed subscheme of~$\kX$ that does not
dominate~$B$---applying resolution of singularities afterwards if the blow-up center was not smooth---and
$u$, $B'$, $\kZ$, $\Sigma$ and the $(g_j)_{j\in J}$ by their strict transforms in the resulting
modification (without changing $\tilde{u}$ and the $(\pi_j)_{j\in J}$).

Using this procedure,
by repeatedly blowing up the intersection points of~$\Sigma$ with~$B'$ and the singular points
of~$B'$, we ensure that
$\Sigma \cap B'=\emptyset$ and that~$B'$ is smooth.

As the generic fibre of $\kZ \setminus \Sigma\to B$ is smooth,
there exists a closed subscheme $\kY \subset \kZ \setminus \Sigma$ that does not dominate~$B$
and is
such that the blowing up of $\kZ \setminus \Sigma$ along~$\kY$
is smooth. (This follows from \cite{Hironaka};
for a more convenient reference, see \cite[Theorem~1.2.1]{TemkinFunctnonemb} and
\cite[top of~p.133]{Hironaka}.)
% Le problème avec Main Theorem I de Hironaka est qu'il suppose, je ne sais
% pas pourquoi, que le schéma est irréductible.
Applying the above procedure again,
we blow up~$\kX$ along the scheme-theoretic closure~$\overline{\kY}$ of~$\kY$ in~$\kX$.
As the strict transform of~$\kZ$ coincides with its  blow-up along~$\overline{\kY}$
(see \cite[top of p.~167]{Hironaka}
or \cite[\href{https://stacks.math.columbia.edu/tag/080E}{Lemma 080E}]{stacks-project}),
we ensure, in this way, that~$\kZ$ is smooth along $B'\subset\kZ$.

It follows, in particular, that the morphism $f|_{\kZ}:\kZ\to B$ is itself smooth along the image
of $u:\Omega\to B'(\C)\subset\kX(\C)$.

\begin{Step}
Applying the tight approximation property to a deformation of the comb.
\end{Step}

The closed subscheme~${\kZ\subset\kX}$ is flat over~$B$.
Let $\sigma:B\to\Hilb_{\kX/B}$ denote  the corresponding section.
Let~${\kU\subset \Hilb_{\kX/B}\times_B\kX}$ be the universal closed subscheme.
Let $h_1:\kU\to\Hilb_{\kX/B}$ and~$h_2:\kU\to\kX$ be the projections.
As $h_1$ is flat and $f|_{\kZ}:\kZ\to B$ is smooth along the image of~$u$ (see Step \ref{stepmodif}), the morphism ${h_1:\kU\to\Hilb_{\kX/B}}$ is in fact smooth  along the subset~$\{(\sigma(b),u(b)), b\in\Omega\}$ of $\kU(\C)$.
We deduce from \cite[Proposition 1.3]{FP} (whose proof works in the case of possibly singular complex spaces, as indicated in \emph{loc.\,cit.}) the existence of an open neighborhood $\Theta$ of $\sigma(\Omega)$ in $\Hilb_{\kX/B}(\C)$ and of a holomorphic section~$\tau:\Theta\to\kU(\C)$ of $h_1$ over $\Theta$ whose image is included in the smooth locus of $h_1$, and such that $\tau\circ \sigma(b)=(\sigma(b),u(b))$ for all $b\in \Omega$.

Let us now  take up the notation~$p$, $\mu$, $\wpsi$ introduced in Step~\ref{stepcomb}.

Let $\wM^0_d\subset\wM_d$ be a dense open subset whose image in $M_d$ is included in~$\oM^{\textrm{fine}}_{0,0}(X)$, that parametrizes generically embedded rational curves, and on which the morphism~$\wpsi$ is just given by taking the image of this rational curve; the existence of such an open subset follows from Step \ref{stepcomb}.
As $\wM_d$ is rationally connected (see Definition~\ref{defiRSC}~(\ref{ii})), one can apply Proposition~\ref{lemperturb} to find a point $\hat{p}\in \wM^0_d(k)$  arbitrarily close to~$p$ in the sense of tight approximation.

The point $\mu(\hat{p})\in M_d(k)$ parametrizes a rational curve over $k$ (isomorphic to~$\P^1_k$, by Tsen's theorem) generically embedded into $X$ by a morphism ${\varepsilon:\P^1_k\to X}$, and ${\wpsi(\hat{p})\in\Hilb_{X/k}(k)}$ is the point associated with the reduced closed sub\-scheme~$\varepsilon(\P^1_k)$ of $X$. 
Let $\sigma:B\to\Hilb_{\kX/B}$ and $\hat{\sigma}:B\to\Hilb_{\kX/B}$ be the sections  of $\Hilb_{\kX/B}\to B$ corresponding to $p$ and $\hat{p}$.
 As $p$ and $\hat{p}$ are close in the sense of tight approximation,  the sections $\sigma$ and $\hat{\sigma}$
have the same $r$-jets at the $b_i$, and $\sigma|_{\Omega}$ and $\hat{\sigma}|_{\Omega}$ are close for the compact-open topology. In particular, if $\hat{p}$ was chosen close enough to $p$,
we may assume, after
replacing $\Omega$
with a smaller open subset containing the given compact subsets over which
we want to approximate~$u$,
that $\hat{\sigma}(\Omega)\subset\Theta$.

Define $\hat{u}:\Omega\to \kX(\C)$ by the formula $\hat{u}:=h_2\circ\tau\circ\hat{\sigma}$. As $u=h_2\circ\tau\circ\sigma$, the sections~$\hat{u}$ and $u$ of $f:\kX(\C)\to B(\C)$ over $\Omega$ have the same $r$-jets at the $b_i$ and are close for the compact-open topology. In addition, the image of the section $\hat{u}$ belongs to the smooth locus of the subscheme $\hat{\kZ}\subset \kX$ associated with $\hat{\sigma}$. The generic fiber of this subscheme is the generically embedded rational curve $\varepsilon(\P^1_k)\subset X$.  Applying the tight approximation property of $\P^1_k$ (for which see \cite[Theorem 10]{Royden} or \cite[Theorem 4.2]{bwtight}) to the normalization of this rational curve and to the strict transform of $\hat{u}$ therefore shows that $\hat{u}$ is close to an algebraic section in the sense of tight approximation. This completes the proof.
\end{proof}

We finally deduce Corollary~\ref{corhyptight} from Theorem~\ref{th2}.

\begin{proof}[Proof of Corollary~\ref{corhyptight}]
When $d=1$ (resp.\ $d=2$), the result follows from \cite[Theorem 4.2]{bwtight} (resp.\ \cite[Example 4.5]{bwtight}).  In all other cases, one can apply Theorems \ref{th2} and~\ref{th:djs}.
\end{proof}

\bibliographystyle{myamsalpha}
\bibliography{holotight}
\end{document}